\newcommand{\op}{^o}
\newcommand{\eg}{e.g.,\xspace}
\newcommand{\ie}{i.e.,\xspace}
\definecolor{DarkBlue}{rgb}{0,0.06,0.25}
\newcommand{\Z}{{\mathbb Z}}
\newcommand{\Q}{{\mathbb Q}}
\newcommand{\hfp}{H{\mathbf F}_p}
\newcommand{\cC}{{\mathcal C}}
\newcommand{\Tor}{\operatorname{Tor}}
\newcommand{\THH}{\mathrm{T\hspace{-.5mm}H\hspace{-.5mm}H}}
\newcommand{\HH}{\mathrm{H\hspace{-.5mm}H}}
\newcommand{\Sym}{\mathrm{Sym}^\infty}
\newcommand{\fib}{\twoheadrightarrow}
\newcommand{\cof}{\rightarrowtail}
\newcommand{\wefib}{\overset\sim\fib}
\newcommand{\ess}{\mathbf S}
\theoremstyle{plain}
{
\theorembodyfont{\rmfamily}

\newtheorem{ex}[subsubsection]{Example}

}
\newtheorem{theo}[subsubsection]{Theorem}
\newtheorem{lemma}[subsubsection]{Lemma}
\newtheorem{cor}[subsubsection]{Corollary}
\newtheorem{remark}[subsubsection]{Remark}
\newenvironment{proof}{\par\noindent{\it Proof: }}{\qed\par}
\newcommand{\fin}{\mathrm{Fin}}
\def\qedbox{$\Box$}
\newbox\QedBox \setbox\QedBox\vbox{\hrule height 1ex width .618ex}
\def\qedbox{\copy\QedBox}
\def\qed{%
   {%
      \unskip
      \nobreak \hfil
      \penalty 50               
      \hskip 3em                
      \null \nobreak \hfil
      \qedbox
      \parfillskip=\z@skip
      \finalhyphendemerits=\z@  
      \endgraf                  
   }}
\begin{document}
\title{Higher Hochschild homology is not a stable invariant}
\author{Bj\o rn Ian Dundas and Andrea Tenti}

\maketitle
\begin{quote}{\small
  {\textbf Abstract}
Higher Hochschild homology is the analog of the homology of spaces, where the context for the coefficients -- which usually is that of abelian groups -- is that of commutative algebras.  Two spaces that are equivalent after a suspension have the same homology.  We show that this is {\em not} the case for higher Hochschild homology, providing a counterexample to a behavior so far observed in stable homotopy theory.
}
\end{quote}


\label{sec:intro}
Higher Hochschild homology appears in many guises, algebraically in the form presented by Loday and Pirashvili (see \eg \cite{MR1755114}) and topologically/categorically in the form of {\em factorization homology} (see \eg \cite{MR2827826}, \cite{MR3040746}, \cite{MR3330249}, \cite{MR3431668}) and {\em smash powers} or {\em higher topological Hochschild homology} (see \cite{MR2729005}/\cite{MR2737802} and \cite{BDS}).  

Regardless of variant, higher Hochschild homology takes as input an algebra and a space.  Keeping the algebra fixed, we get a very nicely behaved functor from spaces that sends homotopy equivalent input to isomorphic output; you can even calculate it for CW-complexes by means of cell attachments.  It is reasonable to think of it as a multiplicative version of singular homology, where the algebra plays the role of the coefficients.

The calculations that have so far appeared have shown a pattern that fits with this idea: just as for singular homology, the output has depended only on the {\em stable} homotopy type of the input.  We elaborate a bit on this in Subsection~\ref{sec:stable} and give some illustrative examples.

However, Theorem~\ref{theo:counter} shows that this is not always the case: the exact behavior depends on the algebra.  Our example is fully algebraic and not very deep, but strong enough to shatter the hope of the more fanciful variants of higher Hochschild homology being stable invariants.  We comment on these consequences in Section~\ref{sec:ess-alg}.

\section{Homology}
\label{sec:hihoho}

Homology of spaces and higher Hochschild homology are examples of the same constructions, the difference is only that whereas ordinary homology has coefficients in an abelian group (or a spectrum), higher Hochschild homology has commutative algebras as input.

Let $\cC$ be a category with (chosen) finite coproducts $\coprod$ with initial object $k$. To aid the intuition, the reader may consider the cases of sets (with coproduct disjoint union and $k=\emptyset$), abelian groups (with the usual coproduct $\oplus$ and $k$ the trivial group $0$), or commutative rational algebras (with coproduct the tensor product $\otimes$ and $k=\Q$).
Then $\cC$ is what is called ``tensored over the category $\fin$ of finite sets''; there is a functor 
$$\fin\times\cC\to\cC,\qquad (S,A)\mapsto\coprod_SA,$$
where $S\mapsto\coprod_SA$ is uniquely characterized up to isomorphism by preserving coproducts
 and $\coprod_{\{1\}}A=A$.  In a picture, it is the ``straight line through $(\emptyset,k)$ and $(\{1\},A)$''.  Concretely, $\coprod_{\{1,\dots,n\}} A$ is a choice of an $n$-fold coproduct of $A$ with itself.

\subsubsection{Examples}
 \begin{enumerate}
 \item  When $\cC$ is the category of sets, then $\coprod_SX\cong S\times X$.
 \item When $\cC$ is the category of abelian groups, then  $\bigoplus_SM\cong\Z[S]\otimes M$, where $\Z[S]$ is the free abelian group on the set $S$.  This is an inspiration for the word ``tensored''.  We have refrained from using the generic categorical notation of a tensor since this clashes with our intended application, and have opted for allowing the coproduct to be visible.
 \item When $\cC$ is the category of commutative $k$-algebras over a commutative ring $k$, then  $\bigotimes_SA$ is the $|S|$-fold tensor (over $k$) of $A$ with itself. 
 \item When $\cC$  is the category of commutative $\ess$-algebras, then  $\bigwedge_SA$ is the $|S|$-fold smash product of $A$ with itself.  Similarly we get smash powers in the category of commutative $k$-algebras for any commutative $\ess$-algebra $k$.   For concreteness, orthogonal spectra is our chosen  model for the category of spectra with symmetric monoidal smash product over the sphere spectrum $\ess$, but other variants would work equally well.
 \end{enumerate}

 \begin{remark}
   Note that all the examples we are interested in have arbitrary colimits, so our constructions extend to arbitrary sets $S$.  In the interest of concreteness, and since our intended application only requires finite sets, we refrain from discussing this further; beyond saying that $\coprod_SA$ is given as the filtered colimit of the $\coprod_UA$ where $U$ varies over the finite subsets of $S$.
 \end{remark}

\subsubsection{Simplicial input}
\label{sec:simpin}

A simplicial object in a category $\cC$ is a functor $\Delta\op\to\cC$ from the opposite of the category $\Delta$ of
finite nonempty ordered sets
and order preserving functions.  
In what follows, {\em space} is short for simplicial set.  If $X$ and $Y$ are two pointed simplicial sets, then $X\vee Y$ is the disjoint union with basepoints identified.

The functoriality of $(S,X)\mapsto\coprod_SX$ shows that both entries can be extended ``degreewise'' to simplicial objects: if $S$ is a simplicial finite set and $A$ a simplicial object in $\cC$, then $([s],[t])\mapsto\coprod_{S_s}A_t$ is naturally a bisimplicial object in $\cC$ of which we take the diagonal: $\coprod_SA=\{[s]\mapsto\coprod_{S_s}A_s\}$. 

\subsubsection{Examples}
\begin{enumerate}
\item When $\cC$ is the category of sets and $S,X$ are simplicial (finite) sets, then $\coprod_SX$ is the product of simplicial sets $S\times X$.
\item When $\cC$ is the category of abelian groups, consider the case where $S$ a finite simplicial set and $M$ abelian group (considered as a discrete simplicial abelian group).  Then the Moore complex $C_*\left(\bigoplus_SM\right)$ (given as usual by $C_n\left(\bigoplus_SM\right)=\bigoplus_{S_n}M$ with boundary maps given by alternating sums of face maps) is nothing but the standard chain complex for calculating the homology $H_*(S;M)$ of $S$ with coefficients in $M$.
\item Consider the case where $\cC$ is the category of commutative $k$-algebras over a commutative ring $k$.  If $S^1$ is the standard simplicial circle and $A$ a commutative $k$-algebra, then  
$$C_*\left(\bigotimes_{S^1}A\right)\,=\,\left\{\dots \to
A^{\otimes n+1}\overset\partial\rightarrow A^{\otimes n}\overset\partial\rightarrow\dots \overset\partial\rightarrow A^{\otimes 2}\overset\partial\rightarrow A \right\}$$ is the standard complex for calculating the (non-derived version of) Hochschild homology $\HH^k_*(A)$.  More generally, for $n\geq 1$, $\bigotimes_{S^n}A$ gives what Pirashvili calls the higher Hochschild homology of $A$.
\item Consider the case where $\cC$ is the category of commutative $\ess$-algebras.  If $A$ is a commutative $\ess$-algebra, then  
$$\bigwedge_{S^1}A
$$ is (under flatness hypotheses) a model for calculating the topological Hochschild homology $\THH_*(A)$. 
\end{enumerate}

In many respects, the properties we are used to from ordinary homology extend.  It is generally true that when you extend a functor degreewise from sets to simplicial sets, then it will preserve simplicial homotopies.  In our examples the target categories have natural model structures and if the $A$ in question is suitably ``flat'' it is even true that $S\mapsto\coprod_SA$ sends weak equivalences to weak equivalences and cofibrations to cofibrations.
The fact that the functor preserves colimits then gives a sort of excision: if $S\subseteq X$ is a subcomplex and $f\colon S\to Y$ a map of simplicial sets, then 
$${\coprod}_{X\coprod_SY}A\cong\left({\coprod}_XA\right)\coprod_{\left(\coprod_SA\right)}\left({\coprod}_YA\right)$$ is the pushout along a cofibration and so a ``homotopy pushout'' and so in principle possible to compute from its pieces.

\subsection{A stable invariant?}
\label{sec:stable}

The point of this note is that one property of ordinary homology which fails to generalize. Ordinary homology is a {\em stable invariant}; in particular, if $M$ is an abelian group, $X$ and $Y$ are simplicial sets  and $\Sigma X\simeq \Sigma Y$ is a homotopy equivalence of suspensions, then the homologies of $X$ and $Y$ are isomorphic. 

For the case $\cC$ being the category of sets, the failure of stable invariance is no surprise: for a given simplicial set $X$ there is no reason for $S\mapsto S\times X$ to be a stable invariant: letting $X$ consist of a single point gives the identity functor $S\mapsto \coprod_S*\cong S$!  

However, for commutative $k$-algebras (for $k$ a commutative ring or $\ess$-algebra), series of computations had led many to believe that higher (topological) Hochschild homology should be a stable invariant.  Apart from the calculational evidence, the following two observation often led one to the wrong conclusion.

\begin{ex} For free symmetric algebras $A$, the functor $S\mapsto\bigotimes_SA$ {\bf is} a stable invariant.

  More concretely, let $k$ be a commutative ring and $E^k(X)$ the symmetric $k$-algebra on a simplicial set $X$, \ie $E^k(X)$ is the monoid algebra $k[\Sym X]$ on the free symmetric monoid $\Sym X= \coprod_{n\geq 0}X^{\times n}/\Sigma_n$ on $X$.   
In other words, $E^k$ is the left adjoint to the forgetful functor from commutative simplicial $k$-algebras to simplicial sets; $E^k$ is the (up to isomorphism) unique functor from simplicial sets to commutative simplicial $k$-algebras preserving colimits with $E^k(\mathbf 1)$ polynomial on one generator $k[t]$; $E(X)\cong \bigotimes_Xk[t]$.  
Hence, $$\bigotimes_SE^k(X)\cong E^k(S\times X)=k[\Sym(S\times X)].$$
Since $\Sigma (S\times X)\cong \left((\Sigma S)\times X\right)\coprod_{X\coprod X}S^0$ (contract $X$ times each of the vertex points of $\Sigma S$) we see that the stable type of $S\times X$ depends only on the stable type of $S$.  Since $H_*Y\cong\pi_*\Sym Y$ and since homology is a stable invariant, the homotopy type of $\Sym(S\times X)$, and hence $\bigotimes_SE^k(X)$,  depends only on the stable type of $S$.  

In effect, if there is an equivalence $\Sigma S\simeq\Sigma T$, then $\bigotimes_SE^k(X)\simeq \bigotimes_TE^k(X)$ (even if it is not induced by a map $S\to T$).
This stable invariance extends from free to to smooth algebras.  

However, even even though we functorially can replace any simplicial commutative $k$-algebra with an equivalent simplicial commutative $k$-algebra $A$ which is a free commutative $k$-algebra in every simplicial degree, the non-functoriality of the argument above denies us to the conclusion that $S\mapsto\bigotimes_SA$ is a stable invariant.
\end{ex}
\begin{ex}
  Let $f\colon X\to Y$ be a map of spaces such inducing a weak equivalence of suspensions $\Sigma f\colon\Sigma X\to\Sigma Y$, then a homotopy coend construction as in \cite[2.2.1.3]{DGM} shows that for commutative $k$-algebras ($k$ a commutative ring or commutative $\ess$-algebra), then the induced map $\bigotimes_XA\to\bigotimes_YA$ is an equivalence.  The crucial point is that in this case, the equivalence of suspensions is induced by a map of underlying (unsuspended) spaces.
\end{ex}

\section{The counterexample}
\label{sec:counterex}
As commented above, we know that $S\mapsto\bigotimes_SA$ is a stable invariant if $A$ is a smooth commutative $k$-algebra.  It turns out that this is sharp: stable invariance breaks down at the slightest singularity:
\begin{theo}\label{theo:counter}
  If $T^2=S^1\times S^1$ is the torus, then $\bigotimes_{T^2}\Q[t]/t^2$ and $\bigotimes_{S^1\vee S^1\vee S^2}\Q[t]/t^2$ are not equivalent simplicial abelian groups.
\end{theo}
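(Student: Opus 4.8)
\emph{The plan} is to write both sides as a relative derived tensor product over $\bigotimes_{S^1}\Q[t]/t^2$ and then compute homotopy groups. Up to homotopy, $T^2$ and $S^1\vee S^1\vee S^2$ are both obtained by attaching a single $2$-cell to $S^1\vee S^1$, the attaching map $S^1\to S^1\vee S^1$ being the commutator $c=aba^{-1}b^{-1}$ in the first case and the constant map in the second. Replacing $\Q[t]/t^2$ by a weakly equivalent simplicial commutative $\Q$-algebra which is free in each degree (\eg the one freely generated by $t$ in degree $0$ and an exterior $\xi$ in degree $1$ with $d\xi=t^2$ -- this is cofibrant), the functor $\bigotimes_{-}\Q[t]/t^2$ carries cofibrations to cofibrations and the pushout presentation above to a homotopy pushout, so
$$\bigotimes_X\Q[t]/t^2\ \simeq\ \Bigl(\bigotimes_{S^1\vee S^1}\Q[t]/t^2\Bigr)\otimes^{\mathbf L}_{\bigotimes_{S^1}\Q[t]/t^2}\Q[t]/t^2 ,$$
where one structure map $\bigotimes_{S^1}\Q[t]/t^2\to\Q[t]/t^2$ is the augmentation in both cases, and the other, $\bigotimes_{S^1}\Q[t]/t^2\to\bigotimes_{S^1\vee S^1}\Q[t]/t^2$, is induced by the attaching map. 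Thus all of the difference between the two spaces is concentrated in this one map of simplicial commutative $\Q$-algebras.

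The subtle point is that this difference is invisible to the obvious invariants. Since $c$ is stably trivial (so $\Sigma T^2\simeq\Sigma(S^1\vee S^1\vee S^2)$), the two spaces have the same homology; moreover $\pi_*\bigotimes_{S^1}\Q[t]/t^2=\HH_*(\Q[t]/t^2)$ is, over $\Q$, just $\Q[t]/t^2$ in degree $0$ together with a square-zero ideal of rank $1$ in each positive degree on which $t$ acts by zero, and -- describing the two structure maps via the pinch comultiplication on $\HH_*(\Q[t]/t^2)$ and the loop-reversal involution (which acts on $\HH_n$ by $(-1)^{n(n+1)/2}$) -- one computes that they agree on \emph{every} homotopy group. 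Hence the $E_2$-pages of the bar spectral sequences computing $\pi_*$ of the two relative tensor products coincide: the difference is secondary, living in the homotopy class of the structure map rather than in its effect on homotopy, and it emerges only after the derived tensor product, through spectral-sequence differentials (equivalently, through the multiplicative structure). This is precisely where $t^2=0$ is used: for a smooth $A$ the relevant product of two degree-$1$ Hochschild classes vanishes, the two structure maps become homotopic, and one recovers the stable invariance of the Example; passing from $\Q[t]$ to $\Q[t]/t^2$ makes that product nonzero.

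So one computes $\pi_*=H_*$ of the two sides by hand and exhibits a degree in which the $\Q$-dimensions differ. There are interchangeable ways to organize this: (i) the free-in-each-degree model turns $\bigotimes_X\Q[t]/t^2$ into the explicit Koszul-type differential graded $\Q$-algebra $\Q[\Sym X]\langle\xi_x:x\in X\rangle$ with $|\xi_x|=|x|+1$, $d\xi_x=t_x^2$, where $\Q[\Sym X]=\bigotimes_X\Q[t]$ is the free graded-commutative algebra on the reduced chains of $X$, and one reads off its homology from a small simplicial model of $X$; (ii) one computes the homology of the finite-type complex $\{[n]\mapsto(\Q[t]/t^2)^{\otimes|X_n|}\}$ for a finite simplicial model of $X$ directly; (iii) one uses $\bigotimes_{T^2}\Q[t]/t^2=\bigotimes_{S^1}\bigl(\bigotimes_{S^1}\Q[t]/t^2\bigr)$ (iterated Hochschild homology) together with the spectral sequence $\HH^{\Q}_{**}\bigl(\HH_*(\Q[t]/t^2)\bigr)\Rightarrow\pi_*$, which carries a nonzero differential forced by the non-formality of $\bigotimes_{S^1}\Q[t]/t^2$, and compares it with the Künneth computation of $\bigotimes_{S^1\vee S^1\vee S^2}\Q[t]/t^2=\bigotimes_{S^1\vee S^1}\Q[t]/t^2\otimes^{\mathbf L}_{\Q[t]/t^2}\bigotimes_{S^2}\Q[t]/t^2$ over $\Q[t]/t^2$. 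In every approach the Euler characteristics of the two sides necessarily agree (the same cells are used), so the discrepant degree $n$ must be one where the difference between the commutator and constant attachments finally registers; I expect $n$ to be small. A strict inequality $\dim_\Q\pi_n\bigotimes_{T^2}\Q[t]/t^2\neq\dim_\Q\pi_n\bigotimes_{S^1\vee S^1\vee S^2}\Q[t]/t^2$ for one such $n$ shows that the two chain complexes are not quasi-isomorphic, hence the simplicial abelian groups are not weakly equivalent, which is the theorem.

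The main obstacle is this last step. Because the building blocks, the maps they induce on homotopy, and therefore the $E_2$-pages of the natural spectral sequences all coincide for the two spaces, no formal manipulation separates the outputs; one must carry out an honest computation that sees the secondary difference, and one must avoid destroying it by replacing $\bigotimes_{S^1}\Q[t]/t^2$ with its (formal) homotopy ring. The explicit Koszul/differential-graded model is the cleanest bookkeeping device, and verifying that the pertinent differential (or multiplicative extension) is nonzero for $T^2$ and zero for $S^1\vee S^1\vee S^2$ is the actual content -- a finite but genuine calculation, reflecting the paper's point that the obstruction is a non-smoothness, not a formal, phenomenon.
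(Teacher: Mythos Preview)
Your proposal is a well-reasoned strategy, but it is not a proof: the theorem \emph{is} the computation, and you have not carried it out. You correctly isolate the difference between the commutator and the constant attaching map and list several plausible bookkeeping devices, but you stop at ``I expect $n$ to be small'' and ``a finite but genuine calculation.'' That calculation is precisely what must be supplied; without it nothing is established. (Your auxiliary claim that the two structure maps $\bigotimes_{S^1}\Q[t]/t^2\to\bigotimes_{S^1\vee S^1}\Q[t]/t^2$ agree on $\pi_*$, hence that the bar $E_2$-pages coincide, is plausible but itself requires an argument; even granted, it only reiterates that the difference is secondary, it does not locate it.)

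The paper's route is different from any of your (i)--(iii) and is what makes the computation manageable. Rather than attacking $L_*(X)=\pi_*\bigotimes_X\Q[t]/t^2$ directly, it first reduces modulo $t$: set $L_*(X;\Q)=\pi_*\bigl(\Q\otimes_{\Q[t]/t^2}\bigotimes_X\Q[t]/t^2\bigr)$, related to $L_*(X)$ by a Bockstein long exact sequence whose boundary $\partial$ is a derivation. With $\Q$-coefficients everything is K\"unneth-flat, so $L_*(S^1\vee S^1\vee S^2;\Q)$ is a tensor product of known pieces. For the torus one runs the multiplicative (Greenlees) spectral sequence attached to the cofiber sequence $S^1\vee S^1\hookrightarrow T^2\to S^2$; a dimension count in total degree $2$ forces a nonzero $d^3(\sigma y_2)$ hitting $y_1^hy_1^v$ --- this is exactly where the commutator registers --- and yields $L_*(T^2;\Q)\cong P(\sigma y_1)\otimes E(y_1^h,y_1^v)/(y_1^hy_1^v)\otimes P(y_2^h,y_2^v)$, already non-isomorphic to the wedge answer. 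Feeding this back through the Bockstein (using that $\partial$ is a derivation and that any product of odd-degree classes vanishes in $L_*(T^2;\Q)$) gives the additive answer: the $\Q$-dimensions of $L_n(T^2)$ and $L_n(S^1\vee S^1\vee S^2)$ agree for $n\le 3$ and first differ at $n=4$, where they are $7$ and $11$. So your expectation that $n$ is small is correct, but only barely; the coincidence through degree $3$ means that a casual low-degree check would be inconclusive, and this is another reason the computation cannot be left as an exercise.
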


This is a counterexample to stable invariance, since the torus and the wedge of spheres are equivalent after one suspension: $\Sigma(T^2)\simeq S^2\vee S^2\vee S^3$. 

There are three levels of sophistication when studying our counterexample:
\begin{enumerate}
\item $\bigotimes_{T^2}\Q[t]/t^2$ and $\bigotimes_{S^1\vee S^1\vee S^2}\Q[t]/t^2$ are not equivalent as  simplicial $\Q[t]/t^2$-algebras,
\item $\bigotimes_{T^2}\Q[t]/t^2$ and $\bigotimes_{S^1\vee S^1\vee S^2}\Q[t]/t^2$ are not equivalent as simplicial rings and
\item $\bigotimes_{T^2}\Q[t]/t^2$ and $\bigotimes_{S^1\vee S^1\vee S^2}\Q[t]/t^2$ are not equivalent as simplicial $\Q$-vector spaces.
\end{enumerate}
The meaning of {1.} is that (upon choosing basepoints for $T^2$ and $S^1\vee S^1\vee S^2$) both simplicial $\Q$-algebras have the structure of simplicial $\Q[t]/t^2$-algebras.  To study {1.} we can work with ``trivial coefficients'' - that is, we tensor both sides with $\Q$ over $\Q[t]/t^2$ and show that the results are different.  This simplifies things considerably since everything is flat over $\Q$ and we have K\"unneth formulae and the like.  Even so, it turns out that we need what amounts to a low-dimensional analysis of the attaching map $S^1\to S^1\vee S^1$ -- the commutator -- in order to get at the calculation for the torus.  For our singular ring $\Q[t]/t^2$, the attaching map is nontrivial, leading in Subsection~\ref{sec:modt} a full calculation of the case with trivial coefficients, verifying 1.

In Subsection~\ref{sec:diffmult} we undertake a slightly more elaborate low dimensional analysis, in particular of the products of certain generators in degree $1$, leading to a verification of 2.

Lastly, in Subsection~\ref{sec:fullcalc} we use a Bockstein sequence argument to show that that the calculations with trivial coefficients actually assemble to a full additive calculation of both sides, and the answers turn out to be different in degree $4$ and higher.

For $A$ a simplicial abelian group, we let $\pi_*A$ be the homotopy groups of the underlying pointed simplicial set ($\pi_*A$ is naturally isomorphic to the homology of the associated Moore complex $Ch_*(A)$).

\subsection{The Bockstein sequence}
\label{sec:Boc}
Let $B\fib C$ be a surjection of commutative simplicial rings with kernel $I$.  Then $\pi_*B\to\pi_*C$ is a map of graded commutative rings, $\pi_*I$ is a $\pi_*B$-modules and the boundary map $\partial\colon\pi_*C\to\pi_{*-1}I$ in the induced long exact sequence
$$\dots\to\pi_{n+1}C\to\pi_nI\to\pi_nB\to\pi_nC\to\pi_{n-1}I\to\dots\to\pi_0B\to\pi_0C\to 0$$
is a derivation.  We are interested in the following case; consider the projection $f\colon\Q[t]/t^2\to\Q$ with kernel $\Q\{t\}$.  
 For any space $X$ and $\Q[t]/t^2$-module $M$, let 
$$L_*(X;M)=\pi_*\left(M\otimes_{\Q[t]/t^2}\bigotimes_X\Q[t]/t^2\right)$$ (so that, when $X=S^1$, this is the usual Hochschild homology of $\Q[t]/t^2$ with coefficients in $M$), and abbreviate $L_*(X)=L_*(X;\Q[t]/t^2)$.  Then we have a ``Bockstein sequence''
$$\xymatrix{\dots\ar[r]& L_{n+1}(X;\Q)\ar[r]^{\partial}& L_{n}(X;\Q)\ar[r]^{j}& L_n(X)\ar[r]^{f}& L_n(X;\Q)\ar[r]&\dots\\
&&\dots\ar[r]^{j}& L_1(X)\ar[r]^{f} &L_1(X;\Q)\ar[r]& 0,}$$ 
where $f\colon L_*(X)\to L_*(X;\Q)$ is a map of graded commutative $\Q$-algebras, $j$ is a map of graded $L_*(X)$-modules and the boundary map $\partial\colon  L_{*+1}(X;\Q)\to L_{*}(X;\Q)$ is a derivation.  

\subsection{Calculating $L_*(S^n;\Q)$ and $L_*(S^n)$}
\label{sec:calcsph}
We use the following notation for free graded commutative $\Q$-algebras.  If $u_{2n}$ is an even dimensional class, $P(u_{2n})$ is the polynomial $\Q$-algebra generated by $u_{2n}$, and if $u_{2n-1}$ is an odd dimensional class, then $E(u_{2n-1})$ is the exterior $\Q$-algebra generated by $u_{2n-1}$.  Similarly with more variables.

Calculating directly with the shuffle product in the Hochschild complex \cite[4.2]{MR1600246}, we see that
$$L_*(S^1)\cong\Q\ltimes\Q\{x_0,x_1,x_2\dots\},$$
the square zero extension of $\Q$ by the vector space generated by $x_0, x_1,x_2,\dots$, with $x_i$ the class in degree $i$ represented by the cycle $t^{\otimes(i+1)}$ if $i$ is even and $1\otimes t^{\otimes i}$ if $i$ is odd (in particular, $t$ represents $x_0$). Likewise,
$$L_*(S^1;\Q)\cong E(y_1)\otimes P(y_2),$$
with $y_1$ being the class represented by $1\otimes t$ and $y_2$ the class represented by $1\otimes t\otimes t$.
Here, the nonzero values in the Bockstein sequence are $f(x_{2n+1})=y_1y_2^n$, $\partial(y_2^{n})=ny_1y_2^{n-1}$ and  $j(y_2^n)=x_{2n}$ for $n\geq 0$.

\begin{lemma}\label{lem:LSQ}
  There are isomorphisms 
$$L_*(S^{2n};\Q)\cong P(\sigma^{2n-1}y_1)\otimes E(\sigma^{2n-1}y_2)
\quad\text{ and }\quad
L_*(S^{2n+1};\Q)\cong E(\sigma^{2n}y_1)\otimes P(\sigma^{2n}y_2)$$ with $|\sigma^jy_i|=i+j$.
\end{lemma}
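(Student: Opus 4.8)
The plan is to compute $L_*(S^m;\Q)$ by induction on $m$, identifying the effect of a suspension of the input with a bar construction and exploiting that every graded algebra that occurs is free graded-commutative over $\Q$, hence formal. Write $R(X)=\Q\otimes_{\Q[t]/t^2}\bigotimes_X\Q[t]/t^2$, regarded as a functor from pointed spaces to augmented simplicial commutative $\Q$-algebras (augmentation induced by $X\to *$); then $L_*(X;\Q)=\pi_*R(X)$, and the normalized chains of $R(X)$ with the shuffle product form an augmented graded-commutative differential graded $\Q$-algebra. Since $\Q[t]/t^2$ is free over $\Q$, the functor $X\mapsto\bigotimes_X\Q[t]/t^2$ preserves weak equivalences and cofibrations; together with the excision recalled in Section~\ref{sec:hihoho} (preservation of colimits) and the fact that $\Q\otimes_{\Q[t]/t^2}(-)$ is a left adjoint, this shows that $R$ preserves homotopy colimits of pointed spaces. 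As $S^{m+1}=\Sigma S^m$ is the pointed homotopy pushout of $*\leftarrow S^m\to *$ and $R(*)=\Q$, we obtain an equivalence of augmented simplicial commutative $\Q$-algebras
$$R(S^{m+1})\;\simeq\;\Q\otimes^{\mathbb L}_{R(S^m)}\Q\;=\;B(\Q,R(S^m),\Q),$$
so $L_*(S^{m+1};\Q)\cong H_*B(\Q,R(S^m),\Q)$.

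Now I run the induction. The base case $L_*(S^1;\Q)\cong E(y_1)\otimes P(y_2)$ is the computation recorded above, and this is the free graded-commutative $\Q$-algebra $\mathrm{Sym}(V)$ on $V=\Q\{y_1\}\oplus\Q\{y_2\}$ with $|y_1|=1$, $|y_2|=2$. I use the observation that a graded-commutative differential graded $\Q$-algebra $C$ with $H_*(C)\cong\mathrm{Sym}(V)$ is formal: choosing a cycle representing each basis element of $V$ gives an algebra map $\mathrm{Sym}(V)\to C$ (the only relations to check are the graded-commutativity relations, and an odd-degree element of a graded-commutative $\Q$-algebra squares to zero), and this map is a quasi-isomorphism since on homology it is an algebra endomorphism of $\mathrm{Sym}(V)$ restricting to the identity on generators. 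Hence $R(S^1)\simeq(\mathrm{Sym}(V),0)$. Inductively assume $R(S^k)\simeq(\mathrm{Sym}(W),0)$ with $W$ spanned by one class in degree $k$ and one in degree $k+1$. Then $R(S^{k+1})\simeq B(\Q,\mathrm{Sym}(W),\Q)$, whose homology I compute via the K\"unneth isomorphism $\Tor^{A\otimes_\Q B}(\Q,\Q)\cong\Tor^A(\Q,\Q)\otimes_\Q\Tor^B(\Q,\Q)$ together with the standard Koszul-dual computations $\Tor^{\Q[w]}(\Q,\Q)\cong E(\sigma w)$ for $|w|$ even and $\Tor^{\Q[v]/v^2}(\Q,\Q)\cong\Gamma(\sigma v)\cong\Q[\sigma v]$ for $|v|$ odd (divided powers are polynomial over $\Q$), where $\sigma$ raises degree by one. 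This gives $H_*R(S^{k+1})\cong\mathrm{Sym}(\sigma W)$ with $\sigma W$ spanned in degrees $k+1$ and $k+2$; being again free graded-commutative, $R(S^{k+1})$ is formal and the induction proceeds.

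Consequently $L_*(S^m;\Q)\cong\mathrm{Sym}(\sigma^{m-1}V)$, where $\sigma$ is the suspension operator and $\sigma^{m-1}V$ is spanned by $\sigma^{m-1}y_1$ in degree $m$ and $\sigma^{m-1}y_2$ in degree $m+1$, so $|\sigma^jy_i|=i+j$. Reading this off by the parity of $m$: when $m=2n$ the class $\sigma^{2n-1}y_1$ is even and $\sigma^{2n-1}y_2$ is odd, so $L_*(S^{2n};\Q)\cong P(\sigma^{2n-1}y_1)\otimes E(\sigma^{2n-1}y_2)$; when $m=2n+1$ the parities are reversed, so $L_*(S^{2n+1};\Q)\cong E(\sigma^{2n}y_1)\otimes P(\sigma^{2n}y_2)$. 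These are precisely the two stated isomorphisms.

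The step I expect to require the most care is the suspension equivalence $R(S^{m+1})\simeq B(\Q,R(S^m),\Q)$: one must verify that $R$ genuinely sends this homotopy pushout of pointed spaces to the derived pushout of augmented simplicial commutative $\Q$-algebras. The subtle point is that the reduction modulo $t$ must be carried out throughout — one works in augmented $\Q$-algebras from the outset, modelling $\Sigma S^m$ as $CS^m\cup_{S^m}CS^m$ with $S^m\hookrightarrow CS^m$ a cofibration and applying $R$ to this pushout — rather than first forming $\bigotimes_{\Sigma S^m}\Q[t]/t^2$ by unreduced excision and only then tensoring down; done the latter way the two copies of $\Q[t]/t^2$ glued in carry $\bigotimes_{S^m}\Q[t]/t^2$-module structures that do not agree with the augmentation module $\Q$ on the nose, and one does not directly recognize the bar construction. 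Granting this equivalence, the remaining ingredients — formality of free graded-commutative $\Q$-DGAs, the K\"unneth splitting of the bar construction, and the two Koszul-dual $\Tor$ computations — are routine.
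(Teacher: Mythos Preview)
Your proof is correct. It realizes the ``$\Tor$-spectral sequence argument'' the paper mentions in its first sentence but then sets aside in favor of the Greenlees spectral sequence for the cofiber sequence $S^k\subset D^{k+1}\to S^{k+1}$, which has $E^2=L_*(S^{k+1};\Q)\otimes L_*(S^k;\Q)\Rightarrow\Q$ and recovers the unknown factor $L_*(S^{k+1};\Q)$ by forcing multiplicative differentials to annihilate every positive-degree class. Your route is the dual one: you use the suspension pushout $*\leftarrow S^k\to *$ rather than the cofiber sequence, identify $R(S^{k+1})$ with a two-sided bar construction, and compute $\Tor^{R(S^k)}_*(\Q,\Q)$ directly by invoking formality of free graded-commutative $\Q$-algebras and the Koszul-dual identifications $\Tor^{P(w)}(\Q,\Q)\cong E(\sigma w)$ and $\Tor^{E(v)}(\Q,\Q)\cong\Gamma(\sigma v)\cong P(\sigma v)$. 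What you gain is that no spectral sequence ever appears, so there are no differential patterns, convergence issues, or multiplicative extensions to argue away; the price is the explicit appeal to characteristic zero for formality and for $\Gamma\cong P$. The paper's choice has the virtue that the same Greenlees machine is reused verbatim in Lemma~\ref{lem:LTQ} to handle the torus, so their argument here doubles as a warm-up. Your closing worry about the order of base change is harmless: since each $(\Q[t]/t^2)^{\otimes n}$ is free as a $\Q[t]/t^2$-module via the basepoint factor, the underived reduction $\Q\otimes_{\Q[t]/t^2}(-)$ already computes the derived one, and being a left adjoint it preserves the (homotopy) pushout either way.
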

\begin{proof}
We use an inductive argument over the dimension $k$ of the sphere, assuming we have obtained the desired result for $L_*(S^k;\Q)$.  A $\Tor$-spectral sequence argument will give the result, or alternatively we may proceed as follows.
Consider the cofiber sequence $S^k \subset D^{k+1} \to S^{k+1}$ given by collapsing the boundary of the disk to one point. The Greenlees spectral sequence \cite{DLRI}  then takes the form:
$$
E^2_{*,*} = L_*(S^{k+1},\Q) \otimes L_*(S^k,\Q) \Rightarrow L_{*+*}(D^{k+1},\Q)=\Q.$$
Since all positive classes have to die, $\sigma^{k-1}y_1\in L_k(S^k;\Q)$ must be the target of a differential: $d^{k+1}(\sigma^ky_1)=\sigma^{k-1}y_1$ for a class $\sigma^ky_1\in L_{k+1}(S^{k+1};\Q)$.  Likewise, $\sigma^{k-1}y_2$ has to be hit by a $d^{k+2}$-differential, giving rise to $\sigma^ky_2\in L_{k+2}(S^{k+1};\Q)$. 
Considering the multiplicative structure of the spectral sequence, we see that $L_*(S^{k+1};\Q)$ must have the desired form. 
\end{proof}

\begin{cor}\label{cor:Lvee}
  There are isomorphisms 
$$L_*(S^1\vee S^1;\Q)\cong L_*(S^1;\Q)\otimes L_*(S^1;\Q)=E(y_1^h,y_1^v)\otimes P(y_2^h,y_2^v)$$ and 
$$L_*(S^1\vee S^1\vee S^2;\Q)\cong L_*(S^1\vee S^1;\Q)\otimes  L_*(S^1;\Q)=E(y_1^h,y_1^v,\sigma y_2)\otimes P(y_2^h,y_2^v,\sigma y_1),$$ 
with $|y^?_i|=i$ and $|\sigma y_i|=i+1$, $i=1,2$.
\end{cor}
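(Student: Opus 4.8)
The plan is to deduce both isomorphisms from a single Künneth-type formula: for pointed spaces $X$ and $Y$ there is an isomorphism of graded $\Q$-algebras $L_*(X\vee Y;\Q)\cong L_*(X;\Q)\otimes_\Q L_*(Y;\Q)$. Granting this, the first isomorphism is immediate from the computation $L_*(S^1;\Q)\cong E(y_1)\otimes P(y_2)$ recorded above, taking $X=Y=S^1$ and writing $y_i^h,y_i^v$ for the two resulting copies of $y_i$. The second follows by wedging on an $S^2$ and invoking Lemma~\ref{lem:LSQ} with $n=1$, which gives $L_*(S^2;\Q)\cong P(\sigma y_1)\otimes E(\sigma y_2)$ with $|\sigma y_1|=2$ and $|\sigma y_2|=3$; reassembling and renaming the tensor factors yields $E(y_1^h,y_1^v,\sigma y_2)\otimes P(y_2^h,y_2^v,\sigma y_1)$ with the asserted degrees.

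To prove the Künneth formula, first apply the excision property of Section~\ref{sec:hihoho}. Writing $X\vee Y=X\coprod_{*}Y$ with $*\subseteq X$ the basepoint, and using $\coprod_*\Q[t]/t^2=\Q[t]/t^2$, excision identifies $\bigotimes_{X\vee Y}\Q[t]/t^2$ with the (genuine, since the relevant maps are cofibrations) homotopy pushout $(\bigotimes_X\Q[t]/t^2)\otimes_{\Q[t]/t^2}(\bigotimes_Y\Q[t]/t^2)$. Now pass to trivial coefficients: for a $\Q$-module $P$ and a $\Q[t]/t^2$-module $N$ one has $P\otimes_{\Q[t]/t^2}N\cong P\otimes_\Q(\Q\otimes_{\Q[t]/t^2}N)$, so applying $\Q\otimes_{\Q[t]/t^2}(-)$ to the pushout produces a weak equivalence $\Q\otimes_{\Q[t]/t^2}\bigotimes_{X\vee Y}\Q[t]/t^2\ \simeq\ (\Q\otimes_{\Q[t]/t^2}\bigotimes_X\Q[t]/t^2)\otimes_\Q(\Q\otimes_{\Q[t]/t^2}\bigotimes_Y\Q[t]/t^2)$. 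Finally take homotopy groups: each simplicial $\Q$-vector space in sight is degreewise flat over $\Q$, so the algebraic Künneth theorem over the field $\Q$ applies and produces the desired isomorphism of graded rings.

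The routine points requiring care are: (i) that the base-change identity above is natural enough to be applied to the whole pushout square at once, which is what makes the final isomorphism multiplicative; (ii) the flatness bookkeeping of Section~\ref{sec:hihoho} ensuring that the degreewise tensor products compute the derived ones and that the displayed homotopy pushout is genuine (here one uses that $\bigotimes_X\Q[t]/t^2$ is degreewise free over $\Q[t]/t^2$); and (iii) the degree accounting when renaming generators after wedging with $S^2$. The only substantive input, Lemma~\ref{lem:LSQ}, is already in hand, so there is no real obstacle. (Alternatively one could play the Bockstein sequence of Section~\ref{sec:Boc} off against a Künneth formula for $L_*(-)$ itself, but passing through trivial coefficients as above is cleaner.)
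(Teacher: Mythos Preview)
Your argument is correct and is essentially the approach the paper intends: the corollary is stated without proof because it is immediate from the excision/pushout formula of Section~\ref{sec:hihoho} together with the K\"unneth theorem over $\Q$ (alluded to at the start of Section~\ref{sec:counterex}), and you have simply made the base-change and flatness bookkeeping explicit. (The middle term in the second displayed line should of course read $L_*(S^2;\Q)$ rather than $L_*(S^1;\Q)$, as your use of Lemma~\ref{lem:LSQ} with $n=1$ reflects.)
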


\begin{lemma}
  There are isomorphisms $$L_*(S^{2n})\cong P_{\Q[t]/t^2}(\sigma^{2n-1}x_1)/t\sigma^{2n-1}x_1\otimes E_{\Q[t]/t^2}(\sigma^{2n-1}x_2)/t\sigma^{2n-1}x_2,$$
where $|\sigma^jx_i|=i+j$ and such that $f(\sigma^{2n-1}x_1)=\sigma^{2n-1}y_1$ and $j(\sigma^{2n-1}y_2)=\sigma^{2n-1}x_2$ in the Bockstein sequence; and
$$L_*(S^{2n-1})\cong \Q\ltimes\Q\{x_0,x_{2n-1},x_{2n},x_{4n-1},x_{4n},\dots\}$$
(square zero extension with $x_0$ corresponding to $t$),
where $|x_k|=k$, $f(x_{2kn-1})=\sigma^{2n-2}y_1(\sigma^{2n-2}y_2)^{k-1}$ and $j(y_{2n}^k)=x_{2kn}$.
\end{lemma}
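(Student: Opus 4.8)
The plan is to induct on the dimension, the case $S^1$ (that is, $n=1$ of the second formula) being the shuffle--product computation recalled just above the statement. Assuming $L_*(S^j)$ known for $j\le k$, I compute $L_*(S^{k+1})$. The tool is the Bockstein sequence of Subsection~\ref{sec:Boc} for $0\to\Q\{t\}\to\Q[t]/t^2\to\Q\to0$: since $\Q\{t\}\cong\Q$ (so $L_*(X;\Q\{t\})\cong L_*(X;\Q)$), it reads as a short exact sequence of graded groups $0\to\operatorname{coker}\partial\xrightarrow{\;j\;}L_*(S^{k+1})\xrightarrow{\;f\;}\ker\partial\to0$, where $\partial$ is a derivation on the known ring $L_*(S^{k+1};\Q)$ of Lemma~\ref{lem:LSQ}, $f$ is a ring map and $j$ an $L_*(S^{k+1})$-module map. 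So the task is to pin down $\partial$ and then do the bookkeeping.

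By Lemma~\ref{lem:LSQ}, $L_*(S^{k+1};\Q)$ is free graded-commutative on $\sigma^ky_1$ in degree $k+1$ and $\sigma^ky_2$ in degree $k+2$, with $\sigma^ky_1$ polynomial when $k{+}1$ is even and exterior when $k{+}1$ is odd ($\sigma^ky_2$ the reverse), and it vanishes in degrees $1,\dots,k$. As $\partial$ has degree $-1$, necessarily $\partial(\sigma^ky_1)=0$ and $\partial(\sigma^ky_2)=\lambda_{k+1}\sigma^ky_1$ for some $\lambda_{k+1}\in\Q$, and then $\partial$ is determined as a derivation. To see $\lambda_{k+1}\ne0$, run the Greenlees spectral sequence of the proof of Lemma~\ref{lem:LSQ} over $\Q[t]/t^2$: $E^2=L_*(S^{k+1})\otimes_{\Q[t]/t^2}L_*(S^k)$ (with $\Tor^{\Q[t]/t^2}$-corrections only where both factors lie in positive degree, irrelevant here) abutting to $L_*(D^{k+1})=\Q[t]/t^2$. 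By the inductive hypothesis $L_*(S^k)$ vanishes in degrees $1,\dots,k-1$ and is one-dimensional in degree $k$, while $L_0(S^k)=L_0(S^{k+1})=\Q[t]/t^2$; hence the term $E^2_{k+1,0}=L_{k+1}(S^{k+1})$ receives no differential and the only differential it supports is $d^{k+1}$, which must therefore inject it into $E^2_{0,k}=L_k(S^k)\cong\Q$. So $\dim_\Q L_{k+1}(S^{k+1})\le1$, whereas $\lambda_{k+1}=0$ would force that dimension to be $2$ (the sequence above would read $0\to\Q\to L_{k+1}(S^{k+1})\to\Q\to0$). Hence $\lambda_{k+1}\ne0$, and rescaling the generator we may take $\partial(\sigma^ky_2)=\sigma^ky_1$.

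With $\partial$ known, both formulas come out of the short exact sequence read degreewise. For $S^{2n}$: in the degree of $(\sigma^{2n-1}y_1)^m$ ($m\ge1$) this class is $\partial$ of $(\sigma^{2n-1}y_1)^{m-1}\sigma^{2n-1}y_2$, so $\operatorname{coker}\partial$ vanishes there and $f$ is an isomorphism, giving $\sigma^{2n-1}x_1$ with $f(\sigma^{2n-1}x_1)=\sigma^{2n-1}y_1$; in the degree of $(\sigma^{2n-1}y_1)^m\sigma^{2n-1}y_2$ ($m\ge0$) the kernel of $\partial$ vanishes, so $j$ is an isomorphism there, giving $\sigma^{2n-1}x_2:=j(\sigma^{2n-1}y_2)$ (the case $m=0$) and $(\sigma^{2n-1}x_1)^m\sigma^{2n-1}x_2=j((\sigma^{2n-1}y_1)^m\sigma^{2n-1}y_2)$ since $j$ is $L_*(S^{2n})$-linear and $f$ multiplicative. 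Because $f(t)=0$, the elements $t\cdot\sigma^{2n-1}x_1$ and $t\cdot\sigma^{2n-1}x_2$ lie in $\operatorname{im}j$ in degrees where $\operatorname{coker}\partial$ was just seen to vanish, so they are zero, and $(\sigma^{2n-1}x_2)^2=j(\sigma^{2n-1}y_2\cdot f(\sigma^{2n-1}x_2))=0$; this is precisely $P_{\Q[t]/t^2}(\sigma^{2n-1}x_1)/t\sigma^{2n-1}x_1\otimes E_{\Q[t]/t^2}(\sigma^{2n-1}x_2)/t\sigma^{2n-1}x_2$. For $S^{2n-1}$ the same reading shows every positive-degree group is at most one-dimensional, nonzero only in degrees $2kn-1$ and $2kn$, with $f(x_{2kn-1})=\sigma^{2n-2}y_1(\sigma^{2n-2}y_2)^{k-1}$ and $x_{2kn}=j((\sigma^{2n-2}y_2)^k)$ (and $x_0=j(1)=t$); since $\operatorname{coker}\partial$ is concentrated in degrees divisible by $2n$, the formal properties of $f$ and $j$ leave no room for a nonzero product of positive-degree classes (or of anything with $x_0=t$), giving the stated square-zero extension.

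The step I expect to be the genuine obstacle is $\lambda_{k+1}\ne0$, \ie handling the Greenlees spectral sequence cleanly over the non-field $\Q[t]/t^2$: the positive part of $L_*(S^k)$ carries the trivial $t$-action, so the $\Tor^{\Q[t]/t^2}$-corrections absent in the $\Q$-coefficient computation of Lemma~\ref{lem:LSQ} must be checked not to interfere with the low-filtration estimate used above. Everything else is the essentially formal extension-bookkeeping, carried out uniformly in $k$, the sole dichotomy being whether $\sigma^ky_1$ is polynomial (even sphere) or exterior (odd sphere).
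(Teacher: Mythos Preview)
Your overall architecture matches the paper's: determine the Bockstein differential $\partial$ on $L_*(S^k;\Q)$, then read off $L_*(S^k)$ from the short exact sequence using that $f$ is a ring map, $j$ is module-linear, and $\partial$ is a derivation. Your detailed bookkeeping in the second and third paragraphs is correct and in fact more explicit than the paper, which just says ``the result then follows \dots\ from the fact that, in the Bockstein sequence, $f$ and $j$ are appropriately multiplicative and $\partial$ is a derivation.''

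The one genuine difference is how the key input $\partial(\sigma^{k-1}y_2)\neq 0$ is obtained. The paper does not argue inductively via a spectral sequence over $\Q[t]/t^2$; it simply \emph{cites} Pirashvili for the facts $L_i(S^k)=0$ for $0<i<k$ and $L_k(S^k)\cong\Q$, and then the Bockstein sequence in degree $k$ forces $f\colon L_k(S^k)\to L_k(S^k;\Q)$ to be an isomorphism, hence $j=0$ there, hence $\partial\colon L_{k+1}(S^k;\Q)\to L_k(S^k;\Q)$ is surjective (so bijective). This avoids entirely the issue you flag: running the Greenlees spectral sequence over the non-field $\Q[t]/t^2$ and controlling the $\Tor^{\Q[t]/t^2}$-corrections. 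Your inductive route is more self-contained but, as you yourself note, the low-filtration estimate $\dim_\Q L_{k+1}(S^{k+1})\le 1$ is only clean once you have checked that those Tor terms do not contaminate the columns $s\le k+1$, $t\le k$ you use; the paper's citation makes that verification unnecessary. If you want to keep your argument self-contained, the cleanest fix is to replace the $\Q[t]/t^2$-Greenlees step by the cited connectivity statement (which follows easily from the bar/Tor spectral sequence $\Tor^{L_*(S^{k-1})}(\Q[t]/t^2,\Q[t]/t^2)\Rightarrow L_*(S^k)$, where the $E^2$-term visibly vanishes in total degrees $1,\dots,k-1$).
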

\begin{proof}
  First: $L_i(S^k)=0$ for $0<i<k$ and $L_k(S^k)=\Q\{\sigma^{k-1}x_1\}$ \cite{MR1755114} where $f(\sigma^{k-1}x_1)=\sigma^{k-1}y_1\in L_k(S^k;\Q)$.  Hence $\partial\colon L_{k+1}(S^k;\Q)\to L_k(S^k;\Q)$ is bijective: $\partial\sigma^{k-1}y_2$ is a nonzero multiple of $\sigma^{k-1}y_1$.  Define $\sigma^{k-1}x_2=j(\sigma^{k-1}y_2)$.  The result then follows from Lemma~\ref{lem:LSQ} and the fact that, in the Bockstein sequence, $f$ and $j$ are appropriatly multiplicative and $\partial$ is a derivation.
\end{proof}

  Additively, the heart of the analysis above is that $(L_*(S^k;\Q),\partial)$ is a complex (in the sense that $\partial^2=0$) with homology a single copy of $\Q$ concentrated in degree zero.  This implies that $\dim_\Q L_s(S^k)=\dim_\Q L_s(S^k;\Q)$ for $s>0$, or, more compactly, the Hilbert (Poincar\'e) series (over $\Q$) satisfy $P_{L_*(S^{k};\Q)}(x)= P_{L_*(S^{k})}(x)-1$, with 
$$P_{L_*(S^{2n};\Q)}(x)= 
\frac{1+x^{2n+1}}{1-x^{2n}},\qquad
P_{L_*(S^{2n-1};\Q)}(x)= 
\frac{1+x^{2n-1}}{1-x^{2n}}.$$
Furthermore, the K\"unneth formula as applied to $(L_*(-;\Q),\partial)$ implies that the Hilbert series for $S^1\vee S^1\vee S^2$ satisfies the formula:
\begin{lemma}\label{dimLX}
$$P_{L_*(S^1\vee S^1\vee S^2;\Q)}(x)=P_{L_*(S^1\vee S^1\vee S^2)}(x)-1=\frac{1+x^3}{(1-x)^2(1-x^2)}=\frac{1-x+x^2}{(1-x)^3},$$
so that $\dim_\Q L_n(S^1\vee S^1\vee S^2)=\frac{n^2+n+2}{2}$ for $n>0$.
\end{lemma}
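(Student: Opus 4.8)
The plan is to compute $P_{L_*(S^1\vee S^1\vee S^2;\Q)}(x)$ directly from Corollary~\ref{cor:Lvee} and then transport the answer to $L_*(S^1\vee S^1\vee S^2)$ by a rank count in the Bockstein sequence. For the first half: by Corollary~\ref{cor:Lvee} the $\Q$-algebra $L_*(S^1\vee S^1\vee S^2;\Q)$ is free graded commutative on the exterior generators $y_1^h,y_1^v$ in degree $1$ and $\sigma y_2$ in degree $3$, together with the polynomial generators $y_2^h,y_2^v,\sigma y_1$ in degree $2$, so its Poincar\'e series is $(1+x)^2(1+x^3)/(1-x^2)^3$. Cancelling $(1+x)^2$ against $(1-x^2)^2=(1-x)^2(1+x)^2$ gives $\frac{1+x^3}{(1-x)^2(1-x^2)}$, and factoring $1+x^3=(1+x)(1-x+x^2)$ and $(1-x)^2(1-x^2)=(1-x)^3(1+x)$ rewrites this as $\frac{1-x+x^2}{(1-x)^3}$; multiplying $1-x+x^2$ into $\frac{1}{(1-x)^3}=\sum_{n\ge 0}\binom{n+2}{2}x^n$ produces the coefficient $\binom{n+2}{2}-\binom{n+1}{2}+\binom{n}{2}=\frac{n^2+n+2}{2}$.

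For the second half I would prove $P_{L_*(X)}(x)=P_{L_*(X;\Q)}(x)+1$ with $X=S^1\vee S^1\vee S^2$ by treating the Bockstein sequence of Subsection~\ref{sec:Boc} as a differential graded object. First, $(L_*(S^1\vee S^1\vee S^2;\Q),\partial)\cong(L_*(S^1;\Q),\partial)\otimes(L_*(S^1;\Q),\partial)\otimes(L_*(S^2;\Q),\partial)$ as DG $\Q$-algebras: the underlying graded-algebra isomorphism is the one in Corollary~\ref{cor:Lvee} (coming from the excision isomorphism $\bigotimes_{X\vee Y}\Q[t]/t^2\cong\bigotimes_X\Q[t]/t^2\otimes_{\Q[t]/t^2}\bigotimes_Y\Q[t]/t^2$ after applying $\Q\otimes_{\Q[t]/t^2}(-)$ and K\"unneth over $\Q$), and it intertwines the differentials because $\partial$ is natural for the wedge-summand inclusions and collapses and is a derivation, hence is forced to equal the Leibniz differential of the tensor product. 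Since each factor $(L_*(S^k;\Q),\partial)$ is a complex with homology a single copy of $\Q$ in degree $0$ (as recalled just before the lemma), the K\"unneth theorem for complexes of $\Q$-vector spaces shows that $(L_*(X;\Q),\partial)$ is a complex with $H_0=\Q$ and $H_n=0$ for $n>0$. Now feed this into exactness of the Bockstein sequence: writing $\partial_n\colon L_n(X;\Q)\to L_{n-1}(X;\Q)$, the long exact sequence yields $0\to L_n(X;\Q)/\mathrm{im}\,\partial_{n+1}\to L_n(X)\to\ker\partial_n\to 0$, so $\dim_\Q L_n(X)=\dim_\Q L_n(X;\Q)-\dim\mathrm{im}\,\partial_{n+1}+\dim\ker\partial_n$, and $\partial^2=0$ lets the last two terms combine into $\dim_\Q H_n(L_*(X;\Q),\partial)$; thus $\dim_\Q L_n(X)=\dim_\Q L_n(X;\Q)+\dim_\Q H_n(L_*(X;\Q),\partial)$. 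With $H_0=\Q$ and $H_{n>0}=0$ this gives $\dim_\Q L_n(X)=\dim_\Q L_n(X;\Q)=\frac{n^2+n+2}{2}$ for $n>0$ and one extra dimension in degree $0$, i.e. $P_{L_*(X)}(x)=P_{L_*(X;\Q)}(x)+1$; combined with the first paragraph this is the lemma. (All groups in sight are finite-dimensional over $\Q$; for $L_n(X)$ this follows from the displayed short exact sequence.)

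I expect the only genuinely delicate point to be the compatibility, in the second paragraph, of the wedge/K\"unneth splitting with the Bockstein differential --- not merely that $\partial^2=0$, but that the full complex $(L_*(X;\Q),\partial)$ splits as the tensor product of the sphere complexes so that K\"unneth computes its homology. If one prefers a more hands-on route, one can instead verify $\partial^2=0$ for arbitrary $X$ directly (it reflects $t^2=0$ in $\Q[t]/t^2$: the self-composite of the connecting map of $0\to\Q\{t\}\to\Q[t]/t^2\to\Q\to 0$ is induced by multiplication by $t^2$) and then compute $H_*(L_*(X;\Q),\partial)$ by the same K\"unneth argument; either way the remaining work is routine bookkeeping with Poincar\'e series and ranks in a long exact sequence.
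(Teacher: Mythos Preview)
Your proposal is correct and follows essentially the same approach as the paper: compute the Poincar\'e series from the free graded commutative description in Corollary~\ref{cor:Lvee}, use the K\"unneth theorem for the complexes $(L_*(S^k;\Q),\partial)$ to see that $(L_*(S^1\vee S^1\vee S^2;\Q),\partial)$ has homology $\Q$ in degree $0$, and read off $P_{L_*(X)}(x)=P_{L_*(X;\Q)}(x)+1$ from the Bockstein sequence. The paper states all of this in one sentence just before the lemma; you have merely spelled out the rank bookkeeping and the coefficient extraction explicitly, which is fine.
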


The multiplicative structure requires some more care.

\subsection{Low dimensional calculations}
\label{sec:low}

Viewing $T^2=S^1\times S^1$ as the diagonal of the bisimplicial set $\{([s],[t])\mapsto S^1_s\times S^1_t\}$ results in bicomplexes with $(\Q[t]/t^2)^{\otimes (s+1)(t+1)}$ and $\Q\otimes(\Q[t]/t^2)^{\otimes ((s+1)(t+1)-1)}$ in bidegree $(s,t)$ calculating 
$L_i(T^2)$ and $L_i(T^2,\Q)$.  We write elements of bidegree $(s,t)$ as $(t+1)\times(s+1$)-matrices (suppressing the tensor symbols, so that $
\begin{smallmatrix}
  a_{00}&a_{10}&a_{20}\\a_{01}&a_{11}&a_{21}
\end{smallmatrix}$ is an element in bidegree $(2,1)$ with the $a_{00}$ in the slot of the bimodule which is either $\Q[t]/t^2$, $t\Q[t]/t^2$ or $\Q$). The vertical and horizontal boundary maps entering the zeroth row or column are all zero (by commutativity), and so the classes in degree $1$ are represented by the horizontal/vertical cycles $1\, t$ and $
\begin{smallmatrix}
  1\\t
\end{smallmatrix}$
in bidegree $(1,0)$ and $(0,1)$: 
$$L_1(T^2)=\Q\{x^h_1,x^v_1\}\underset{\cong}{\overset{f}{\longrightarrow}}L_1(T^2;\Q)=\Q\{y^h_1,y^v_1\},\qquad f(x^h_1)=y^h_1,f(x^v_1)=y^v_1.
$$
Likewise, 
$$L_2(T^2)=\Q\{x^h_2,x^v_2,\sigma x_1,t\sigma x_1\}{\overset{f}{\longrightarrow}}L_2(T^2;\Q)=\Q\{y^h_2,y^v_2,\sigma y_1\},\quad f(x^h_2)=f(x^v_2)=0, f(\sigma x_1)=\sigma y_1,
$$
where $x^h_2$ and $x^v_2$ are represented by $t\,t\,t$ and $
\begin{smallmatrix}
  t\\t\\t
\end{smallmatrix}$,
while  $y^h_2$ and $y^v_2$ are represented by $1\,t\,t$ and $
\begin{smallmatrix}
  1\\t\\t
\end{smallmatrix}$, whereas $\sigma x_1$ and $\sigma y_1$ both are represented by
$
\begin{smallmatrix}
  1&1\\1&t
\end{smallmatrix}
$ (with the difference that $t\sigma y_1=0$).  Comparison through the inclusion of/projections to the coordinate circles show that $\partial(y^?_2)=y^?_1$.  Furthermore, the projection $T^2\to S^2$ collapsing the $1$-skeleton induces maps that send the classes $\sigma x_1$ and $\sigma y_1$ to classes of the same name.

\subsection{Mod $t$ calculations}
\label{sec:modt}
\begin{lemma}\label{lem:LTQ}
  There is an isomorphism $L_*(T^2;\Q)\cong P(\sigma y_1)\otimes E(y_1^h,y_1^v)/y_1^hy_1^v\otimes P(y_2^h,y_2^v).$
\end{lemma}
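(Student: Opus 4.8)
The plan is to build the torus from simpler spaces and run the Greenlees spectral sequence \cite{DLRI} in the multiplicative form already used in the proof of Lemma~\ref{lem:LSQ}. Concretely, the standard CW structure gives the cofiber sequence $S^1\vee S^1\hookrightarrow T^2\to S^2$ ($1$-skeleton, then collapse onto the top cell), and hence a spectral sequence
$$E^2_{*,*}=L_*(S^2;\Q)\otimes L_*(S^1\vee S^1;\Q)\ \Longrightarrow\ L_*(T^2;\Q).$$
By Lemma~\ref{lem:LSQ} and Corollary~\ref{cor:Lvee} this is the free graded commutative $\Q$-algebra $E^2=P(\sigma y_1)\otimes E(\sigma y_2)\otimes E(y_1^h,y_1^v)\otimes P(y_2^h,y_2^v)$, with $|\sigma y_1|=2$, $|\sigma y_2|=3$, $|y_i^?|=i$, and the differentials are derivations.

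The first task is to locate the permanent cycles. The collapse $T^2\to S^2$ shows, as recorded in Subsection~\ref{sec:low}, that $\sigma y_1$ survives, and the two coordinate projections $T^2\to S^1$ show that $y_1^h,y_1^v,y_2^h,y_2^v$ survive; hence the only algebra generator that can carry a differential is $\sigma y_2$, and for degree reasons $d^2(\sigma y_2)=0$ (its target $E^2_{1,1}$ vanishes since $L_1(S^2;\Q)=0$). Thus the first possibly nonzero differential is $d^3(\sigma y_2)\in L_2(S^1\vee S^1;\Q)=\Q\{y_1^hy_1^v,y_2^h,y_2^v\}$. Each coordinate projection extends to a map of cofiber sequences whose quotient is $S^2\to\ast$, so it sends $\sigma y_2$ to $0$; naturality of $d^3$ then kills the $y_2^h$ and $y_2^v$ components, leaving $d^3(\sigma y_2)=\alpha\,y_1^hy_1^v$ for a scalar $\alpha$. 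The crux of the whole argument is that $\alpha\neq0$: this is forced by the low-dimensional computation of Subsection~\ref{sec:low}, namely $\dim_\Q L_2(T^2;\Q)=3$, equivalently that $y_1^hy_1^v$ already vanishes in $L_2(T^2;\Q)$. In other words $\Q[t]/t^2$ detects the commutator attaching map $[a,b]\colon S^1\to S^1\vee S^1$; for a smooth coefficient algebra one would instead get $\alpha=0$.

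Granting $\alpha\neq0$, the derivation $d^3$ is determined: writing $A=P(\sigma y_1)\otimes E(y_1^h,y_1^v)\otimes P(y_2^h,y_2^v)$ and $E^3=A\oplus A\sigma y_2$, it sends $a\mapsto 0$ and $a\sigma y_2\mapsto\pm\alpha\,a\,y_1^hy_1^v$, so $E^4\cong A/(y_1^hy_1^v)\ \oplus\ \bigl((y_1^h,y_1^v)A\bigr)\sigma y_2$. The first summand is exactly the algebra $P(\sigma y_1)\otimes E(y_1^h,y_1^v)/y_1^hy_1^v\otimes P(y_2^h,y_2^v)$ of the statement, and it consists of permanent cycles, being generated by the surviving classes. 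The remaining, technically most delicate, step is to kill the residual summand $\bigl((y_1^h,y_1^v)A\bigr)\sigma y_2$. This cannot be read off from the product structure alone (the relevant later differentials are secondary, reflecting the non-formality of $\bigotimes_{S^1}\Q[t]/t^2$), so I would compute them from the finer low-dimensional analysis of the products of degree-one classes carried out in Subsection~\ref{sec:diffmult}, combined with a Poincar\'e-series count and the Bockstein sequence of Subsection~\ref{sec:Boc} to pin the total dimensions (cf.\ Lemma~\ref{dimLX}); the outcome is that $E^\infty$ is the stated algebra.

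The main obstacle is therefore twofold. Conceptually it is the input $\alpha\neq0$: everything downstream is essentially formal once the low-dimensional torus computation is available, but proving that $d^3(\sigma y_2)$ is a \emph{nonzero} multiple of $y_1^hy_1^v$ is precisely the assertion that the singular ring $\Q[t]/t^2$ already separates $T^2$ from $S^1\vee S^1\vee S^2$ in low degrees. Technically it is the elimination of the residual term $\bigl((y_1^h,y_1^v)A\bigr)\sigma y_2$, where the bookkeeping is heaviest and where the degree-one product computations and the Bockstein are indispensable; this is also the point at which one should double-check consistency with the independent partial information assembled in Subsections~\ref{sec:low} and~\ref{sec:fullcalc}.
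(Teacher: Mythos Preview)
Your approach is the paper's: the Greenlees spectral sequence for $S^1\vee S^1\subset T^2\to S^2$, with $d^3(\sigma y_2)$ a nonzero multiple of $y_1^hy_1^v$ forced by the low-dimensional count $\dim_\Q L_2(T^2;\Q)=3$ from Subsection~\ref{sec:low}. You actually go further than the paper in one respect: you compute $E^4$ honestly and find
\[
E^4\;\cong\;A/(y_1^hy_1^v)\ \oplus\ \bigl((y_1^h,y_1^v)A\bigr)\sigma y_2,
\]
whereas the paper simply asserts that $E^4$ equals the algebra in the statement and that ``the multiplicative structure leaves no room for further differentials or extensions,'' never mentioning the residual piece. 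Your observation that $y_1^hy_1^v$ is a zerodivisor in $A$, so that the Koszul-type differential $d^3$ leaves an annihilator term, is correct.

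The genuine gap is in your plan to dispose of this residual by higher differentials: that cannot work. Every class in $\bigl((y_1^h,y_1^v)A\bigr)\sigma y_2$ is a product of a permanent cycle in $A$ with one of $\sigma y_2\,y_1^h$, $\sigma y_2\,y_1^v$, $\sigma y_2\,y_1^hy_1^v$, which sit in bidegrees $(3,1)$, $(3,1)$, $(3,2)$. For $r\geq4$ any $d^r$ out of filtration $s=3$ lands in $s=3-r<0$, and any $d^r$ into these spots would have source in second grading $t\leq 2-r<0$; the differentials $d^2$ and $d^3$ on and into these positions are already zero. Hence these three classes---and by multiplicativity the whole residual summand---are permanent cycles surviving to $E^\infty$. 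The tools you invoke do not help: Subsection~\ref{sec:diffmult} treats products in $L_*(T^2)$, not in $L_*(T^2;\Q)$; and Subsection~\ref{sec:fullcalc} \emph{uses} Lemma~\ref{lem:LTQ} as input, so appealing to it here is circular. In short, the first-quadrant spectral sequence as you (and the paper's sketch) run it does not collapse to the stated algebra; if the Lemma is to hold as written, it needs a different argument than ``kill the residual by later $d^r$.''
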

\begin{proof}
  Consider the cofiber sequence $S^1\vee S^1\subseteq T^2\to S^2$ given by collapsing the $1$-skeleton $S^1\vee S^1$ of the torus $T^2$.  This gives rise to a Greenlees spectral sequence of the form
$$E^2_{s,t}=L_s(S^2,\Q)\otimes L_t(S^1\vee S^1;\Q)\Rightarrow L_{s+t}(T^2;\Q).$$
Since the dimension of the $E^2$-page in total degree $2$ is $4$, whereas $\dim L_2(T^2;\Q)=3$, we must have a nontrivial $d^3$-differential $E^3_{3,0}=L_3(S^2;\Q)\to E^3_{0,2}=L_2(S^1\vee S^1;\Q)$.  However, from comparison with coordinate spheres, we have that $y^h_2,y^v_2\in E^2_{2,0}$ are permanent cycles, so $d^3\sigma y_2$ must be a nonzero multiple of $y_1^hy_1^v$.

Consequently, $$E^4=P(\sigma y_1)\otimes E(y_1^h,y_1^v)/y_1^hy_1^v\otimes P(y_2^h,y_2^v),$$
and the multiplicative structure leaves no room for further differentials or extensions.
\end{proof}

Hence, the Hilbert series of $L_*(T^2;\Q)$ is 
$$P_{L_*(T^2;\Q)}(x)=\frac{1+2x}{(1-x^2)^3},$$ 
that is to say, $\dim_\Q L_{2n}(T^2;\Q)=\frac{(n+1)(n+2)}{2}$ and $\dim_\Q L_{2n+1}(T^2;\Q)= (n+1)(n+2)$.
Comparing with Lemma~\ref{cor:Lvee}, we see that there are no isomorphisms between $L_*(T^2;\Q)$ and $L_*(S^1\vee S^1\vee S^2;\Q)$; establishing the first step of our proof of Theorem~\ref{theo:counter}.

\subsection{Difference in multiplicative structure}
\label{sec:diffmult}

We now show that the ring structures of $L_*(T^2)$ to $L_*(S^1\vee S^1\vee S^2)$ are different, thus finishing the second part of the proposed proof of the counterexample displayed in Theorem~\ref{theo:counter}.

An isomorphism of graded rings from $L_*(T^2)$ to $L_*(S^1\vee S^1\vee S^2)$ must send multiples of $t\in L_0(T^2)$ to multiples of $t\in L_0(S^1\vee S^1\vee S^2)$.

Both $L_1(T^2)$ and $L_1(S^1\vee S^1\vee S^2)$ (resp. $L_1(T^2;\Q)$ and $L_1(S^1\vee S^1\vee S^2;\Q)$) are generated by classes named $x_1^h$ and $x_1^v$ (resp. $y_1^h$ and $y_1^v$) coming from the inclusion of coordinate circles, and in the Bockstein sequence $f(x^?_1)=y^?_1$.  

Hence, $f(x^h_1x^v_1)=y^h_1y^v_1$ in both cases, but $y^h_1y^v_1=0$ in $L_2(T^2;\Q)$ while $y^h_1y^v_1\neq 0$ in $L_2(S^1\vee S^1\vee S^2;\Q)$.  This means that all classes in the image of the multiplication $L_1(T^2)\otimes L_1(T^2)\to L_2(T^2)$ are divisible by $t$, while the product $x^h_1x^v_1\in L_2(S^1\vee S^1\vee S^2;\Q)$ is {\bf not} divisible by $t$.  

Hence, there is no isomorphism of graded rings from $L_*(T^2)$ to $L_*(S^1\vee S^1\vee S^2)$.

\subsection{Difference in additive structure}
\label{sec:fullcalc}
With a little more work, we can see that $L_*(T^2)$ and $L_*(S^1\vee S^1\vee S^2)$ are different as graded vector spaces.  First consider the analysis of the low dimensional behavior in subsection~\ref{sec:low}.  In the Bockstein sequence for $L_*(T^2)$ we have that $\partial y_1^?=0$, $\partial y_2^?=y_1^?$, $\partial\sigma y_1=0$, $f(x_1^?)=y_1^?$, $f(y_2^?)=0$, $f(\sigma x_1)=\sigma y_1$, $j(y_1^?)=0$, $j(y_2^?)=x_2^?$ and $j(\sigma y_1)=t\sigma x_1$ (with $?=h,v$).  

By Lemma~\ref{lem:LTQ}, any product of an odd dimensional class with an odd dimensional class is zero ($y^h_1$ and $y^v_1$ are the only odd dimensional generators, and their product is zero).
This implies that for any $n$, the boundary map $\partial\colon L_{2n+1}(T^2;\Q)\to L_{2n}(T^2;\Q)$ is trivial.  On the other hand, the rank of $\partial\colon L_{2n}(T^2;\Q)\to L_{2n-1}(T^2;\Q)$ is $2+3+\dots+(n+1)=\frac{(n+1)(n+2)}{2}-1=\frac{n(n+3)}{2}$ (split according to the divisibility of $\sigma y_1$).

A quick calculation in the Bockstein sequence for $L_*(T^2)$ then yields that
$$\dim_\Q L_{2n-1}(T^2)=\frac{n(3n+1)}{2},\qquad \dim_\Q L_{2n}(T^2)=\frac{n^2+3n+4}2.
$$
In particular, $\dim_\Q L_4(T^2)=7$.  Comparing this with the calculation $\dim_\Q L_4(S^1\vee S^1\vee S^2)=11$ in \ref{dimLX}, we see that there is no isomorphism of graded vector spaces between $L_*(T^2)$ and $L_*(S^1\vee S^2\vee S^2)$.  This concludes the proof of Theorem~\ref{theo:counter}.

The authors admit they were somewhat baffled by the fact that for $j\leq 3$ the rational dimensions of $L_j(T^2)$ and $L_j(S^1\vee S^1\vee S^2)$ are equal.

\section{Commutative $\ess$-algebras}

\label{sec:ess-alg}
Let $A$ be a cofibrant replacement of the Eilenberg-Mac~Lane spectrum $H\Q[t]/t^2$ in the category of commutative $\ess$-algebras, $\ess\cof A\wefib H\Q[t]/t^2$.  Since the rationalization of $\ess$ is $H\Q$, the canonical map 
comparing smash and tensor
$$\bigwedge_XA\to H\left(\bigotimes_X\Q[t]/t^2\right)$$ is a weak equivalence 
for any space $X$.

Hence, Theorem~\ref{theo:counter} shows that $X\mapsto\bigwedge_XA$ is not a stable invariant.

\begin{remark}
  At present, we know of no examples showing that $X\mapsto\bigwedge_X\hfp$ is not a stable equivalence.  As a matter of fact, the calculation \cite{AD} showing that the equivariant structure of the smash power over the $n$-torus $\bigwedge_{T^n}\hfp$ detects the periodic map $v_{n-1}$ relies on a calculation using the splitting $\Sigma T^n\simeq \bigvee_{i=1}^n{n\choose{i}}S^{i+1}$.  
\end{remark}
\bibliographystyle{amsalpha}
\providecommand{\bysame}{\leavevmode\hbox to3em{\hrulefill}\thinspace}
\providecommand{\MR}{\relax\ifhmode\unskip\space\fi MR }
\providecommand{\MRhref}[2]{%
  \href{http://www.ams.org/mathscinet-getitem?mr=#1}{#2}
}
\providecommand{\href}[2]{#2}
\bibliographystyle{amsalpha}
\bibliography{dt.bib}
\end{document}